 \newtheorem{lem}{Lemma}
\theoremstyle{definition} 
\theoremstyle{remark} \newtheorem{rem}{Remark}
\begin{document}

\title{Geometry of totally real Galois fields \\ of degree 4}
\author{Yury Kochetkov}
\date{}
\email{yuyuk@prov.ru}

\begin{abstract}{We will consider a totally real Galois field $K$
of degree 4 as the linear coordinate space $\mathbb{Q}^4\subset\mathbb{R}^4$.
An element $k\in K$ is called strictly positive, if all its conjugates are
positive. The set of strictly positive elements is a convex cone in $K$. The
convex hull of strictly positive integral elements is a convex subset of this
cone and its boundary $\Gamma$ is an infinite union of 3-dimensional
polyhedrons. The group $U$ of strictly positive units acts on $\Gamma$: the
action of a strictly positive unit permutes polyhedrons. Fundamental domains of
this action are the object of study in this work. We mainly present some
interesting examples.}\end{abstract}

\maketitle

\section{Introduction}

Let $K$ be a cubic totally real Galois field, defined by polynomial
$p=x^3+ax+b\in\mathbb{Q}[x]$. Elements of $K$ will be written in the form
$kx^2+lx+m$, $k,l,m\in\mathbb{Q}$. This notation allows us to identify $K$ with
3-dimensional space $\mathbb{Q}^3\subset \mathbb{R}^3$. . An element $k\in K$
is called strictly positive, if all its conjugates are positive, i.e. if
$x_1,x_2,x_3$ are (real) roots of $p$ then an element $kx^2+lx+m$ is strictly
positive, if
$$\left\{\begin{array}{l}kx_1^2+lx_1+m>0\\ kx_2^2+lx_2+m>0\\ kx_3^2+lx_3+m>0
\end{array}\right.$$ These three conditions define a convex cone $C$ in
$\mathbb{R}^3$. Let $O\subset C$ be the set of strictly positive integral
elements of $K$, $\overline{O}$ --- its convex hull (an infinite polyhedral
subset of $C$) and $\Gamma$ be the boundary of this convex hull --- an infinite
polygonal 2-dimensional complex in $\mathbb{R}^3$. Let $U$ be the group of
strictly positive units (a free Abelian group of rank 2). The action of a
positive unit on $\Gamma$ induces a permutation of its faces. The fundamental
domain of the action of $U$ on $\Gamma$ is a finite union of polygons with
pairwise identified sides --- a torus. In [1] the structure of this polygonal
complex was studied.

In this work we will study a 4-dimensional analogue of this problem.

\section{Totally real Galois fields of degree 4}

Let $K$  be a totally real Galois field of degree 4 defined by a polynomial
$p\in \mathbb{Q}[x]$. Elements of the field will be written in the form
$kx^3+lx^2+mx+n$, $k,l,m,n\in \mathbb{Q}$, and the field will be considered as
4-dimensional space $\mathbb{Q}^4\subset \mathbb{R}^4$ with coordinates
$k,l,m,n$. By $O$ will be denoted the set of integral strictly positive
elements from $K$: an element $kx^3+lx^2+mx+n$ is called strictly positive if
$$\left\{\begin{array}{l}kx_1^3+lx_1^2+mx_1+n>0\\ kx_2^3+lx_2^2+mx_2+n>0\\
kx_3^3+lx_3^2+mx_3+n>0\\ kx_4^3+lx_4^2+mx_4+n>0 \end{array}\right.$$ where
$x_1,x_2,x_3,x_4$ are (real) roots of $p$. These four conditions define a
convex cone $C\subset\mathbb{R}^4$. The convex hull $\overline{O}$ of the set
$O$ is an infinite polyhedral 4-dimensional subset of $C$. Its boundary
$\Gamma$ is an infinite 3-dimensional polyhedral complex. Let $U\subset O$ be
the group of strictly positive units --- a free Abelian group of rank 3. The
action of a positive unit on $\Gamma$ induces a permutation of its
3-dimensional polyhedra. We will study fundamental domain of action of $U$ on
$\Gamma$. This domain --- a finite union of 3-dimensional polyhedrons with
pairwise identified faces is homeomorphic to 3-dimensional torus [2].

Galois group $G$ of our field $K$ is either $\mathbb{Z}_4$, or
$\mathbb{Z}_2\oplus\mathbb{Z}_2$. In both cases $G$ contains an invariant
subgroup of order 2, so $K$ contains a subfield $L$ of degree 2. $L$ is a
decomposition field of a second degree polynomial with rational coefficients
and positive discriminant $d$. Hence, $K$ is a second degree algebraic
extension of $L$, defined by polynomial $r\in\mathbb{Q}(\sqrt d)[x]$.

Let $r=x^2+2sx+t$, $s,t\in\mathbb{Q}(\sqrt d)$, then roots
$x_1,x_2$ of $r$ are of the form
$$x_{1,2}=-s\pm \sqrt{s^2-t}\in K\Rightarrow \sqrt{s^2-t}\in K.$$
If $s^2-t=m+n\sqrt d$, $m,n\in \mathbb{Q}$, then $\sqrt{m+n\sqrt d}\in K$, i.e.
all conjugates $\pm \sqrt{m\pm n\sqrt d}$ belong to $K$. Thus, the field $K$ is
the decomposition field of the biquadratic polynomial
\begin{multline*}q=(x-\sqrt{m+n\sqrt d})(x+\sqrt{m+n\sqrt d})(x-\sqrt{m-n\sqrt d})
(x+\sqrt{m-n\sqrt d})=\\=x^4-2mx^2+m^2-n^2d=x^4-2ax^2+b.\end{multline*} Such
biquadratic polynomial defines a totally real Galois field if: 1) all its roots
are real, i.e. if $a>0$, $b>0$ and $d=a^2-b>0$; 2) a root $y_1=\sqrt{a+\sqrt
d}$ of $q$ polynomially generates all other roots of $q$, in particular, the
root $y_2=\sqrt{a-\sqrt d}$, which can represented only in the form
$y_2=ky_1^3+ly_1$. This gives us two possibilities:
\begin{itemize}
    \item either $ky_2^3+ly_2=-y_1$, then $G=\mathbb{Z}_4$, $a^2b-b^2=bd=c^2$,
    $l=(2a^2-b)/c=(a^2+d)/c$ and $k=-a/c$;
    \item or $ky_2^3+ly_2=y_1$, then $G=\mathbb{Z}_2\oplus\mathbb{Z}_2$,
    $b=c^2$, $l=2a/c$ and $k=-1/c$.
\end{itemize}

\begin{rem} If $b$ is a full square and $a^2b-b^2$ is also a full
square, then polynomial $x^4-2ax^2+b$ is reducible. \end{rem}

In what follows we'll assume that $2a$ and $b$ are positive integers. Let us
describe at first the structure of the fundamental domain for fields with
smallest $a$ and $b$: for the field $K_1=\mathbb{Q}[x]/(x^4-4x^2+1)$ and for
the field $K_2=\mathbb{Q}[x]/(x^4-4x^2+2)$ (polynomial $x^4-3x^2+1$ is
reducible).

\section{The field $K_1=\mathbb{Q}[x]/(x^4-4x^2+1)$}

Galois group here is $\mathbb{Z}_2\oplus\mathbb{Z}_2$. Transformations
$x\mapsto x^3-4x$ and $x\mapsto -x$ are generators of Galois group. Elements
$kx^3+lx^2+mx+n$, $k,l,m,n\in \mathbb{Z}$ are integral.

Hyperplane $k+l+n-1=0$ is a support hyperplane for the set $O$. It
contains 9 integral strictly positive elements:
$$\begin{array}{lll} A_1=(-2,0,6,3),&A_2=(-2,3,2,0),&A_3=(-1,0,3,2),\\
A_4=(-1,0,4,2),&A_5=(-1,1,2,1),&A_6=(-1,2,0,0),\\ A_7=(-1,2,1,0),&
A_8=(0,0,0,1),&A_9=(0,1,0,0).\end{array}$$ Elements $A_1,A_2,A_4,A_6,A_8$ and
$A_9$ are units. Element $A_3$ is a midpoint of segment $[A_1,A_8]$ and has
norm 4; element $A_7$ is a midpoint of segment $[A_2,A_9]$ and has norm 4;
element $A_5$ is a midpoint of segment $[A_4,A_6]$ and has norm 9. The convex
hull of these points in our hyperplane is an octahedron:
\[\begin{picture}(170,170) \put(10,85){\circle*{2}} \put(70,55){\circle*{2}}
\put(85,10){\circle*{2}} \put(85,160){\circle*{2}} \put(100,130){\circle*{2}}
\put(160,100){\circle*{2}} \put(10,85){\line(1,1){75}}
\put(10,85){\line(2,-1){60}} \put(10,85){\line(1,-1){75}}
\put(70,55){\line(1,-3){15}} \put(70,55){\line(2,1){90}}
\put(85,10){\line(5,6){75}} \put(85,160){\line(5,-4){75}}
\qbezier(70,55)(78,108)(85,160) \qbezier[50](10,85)(55,108)(100,130)
\qbezier[30](100,130)(130,115)(160,100) \qbezier[20](85,160)(93,145)(100,130)
\qbezier[60](85,10)(93,70)(100,130) \put(-3,82){\small $A_2$}
\put(163,97){\small $A_8$} \put(73,5){\small $A_9$} \put(72,161){\small $A_1$}
\put(60,46){\small $A_6$} \put(86,130){\small $A_4$} \end{picture}\] Faces of
this octahedron are not identified by the action of group $U$. Hence, the
construction of fundamental domain is not finished.

Indeed, hyperplane $4k+5l/2+m+n-1=0$ is also a support hyperplane for the set
$O$. It contains 5 integral strictly positive elements:
$$A_1,\,\,A_3,\,\,A_6,\,\,A_8\text{ and also an element } A_{10}=(0,0,-1,2),$$
which is a unit. The convex hull of these points is a tetrahedron, which is
glued to our octahedron (by the triangular face $A_1A_6A_8$). In thus obtained
polyhedron some faces (not all!) are identified:
\begin{enumerate}
    \item multiplication by the unit $A_9$ identifies the face $A_1A_8A_{10}$
    with the face $A_2A_6A_9$: $A_1\to A_2$, $A_8\to A_9$, $A_{10}\to A_6$;
    \item multiplication by the unit $A_4$ identifies the face $A_6A_8A_{10}$
    with the face $A_1A_2A_4$: $A_6\to A_2$, $A_8\to A_4$, $A_{10}\to A_1$;
    \item multiplication by the unit $A_{10}$ identifies the face $A_4A_8A_9$
    with the face $A_1A_6A_{10}$: $A_4\to A_1$, $A_8\to A_{10}$,
    $A_9\to A_6$.
\end{enumerate} Faces $A_1A_4A_8$, $A_1A_2A_6$, $A_6A_8A_9$ ¨ $A_2A_4A_9$ are
"free". Hence, the construction of fundamental domain is not finished.

Indeed, hyperplane $k+l/2+n-1=0$ is also a support hyperplane for the set $O$.
It contains 5 integral strictly positive elements:
$$A_1,\,\,A_3,\,\,A_4,\,\,A_8\text{ and also an element }
A_{11}=(-4,-2,15,8),$$ which is a unit. The convex hull of these points is a
tetrahedron, which is glued to our octahedron (by the triangular face
$A_1A_4A_8$). In thus obtained polyhedron all its 2-dimensional faces are
pairwise identified:
\begin{enumerate}
    \item multiplication by the unit $A_9$ identifies the face $A_1A_8A_{11}$
    with the face $A_2A_4A_9$: $A_1\to A_2$, $A_8\to A_9$, $A_{11}\to A_4$;
    \item multiplication by the unit $A_6$ identifies the face $A_4A_8A_{11}$
    with the face $A_1A_2A_6$: $A_4\to A_2$, $A_8\to A_6$, $A_{11}\to A_1$;
    \item multiplication by the unit $A_{11}$ identifies the face $A_6A_8A_9$
    with the face $A_1A_4A_{11}$: $A_6\to A_1$, $A_8\to A_{11}$,
    $A_9\to A_4$.
\end{enumerate} The construction of the fundamental domain is finished.

\section{The field $K_2=\mathbb{Q}[x]/(x^4-4x^2+2)$}

Galois group here is $\mathbb{Z}_4$. Transformation $x\mapsto x^3-3x$ is a
generator of Galois group. Elements $kx^3+lx^2+mx+n$, $k,l,m,n\in \mathbb{Z}$
are integral.

Hyperplanes
$$\begin{array}{lll} \Pi_1:\,k+2l+m+2n-2=0, && \Pi_2:\,
3k+4l+2m+4n-4=0,\\ \Pi_3:\,k+2l+2n-2=0, && \Pi_4:\, k+4l-m+4n-4=0,\\
\Pi_5:\,k-2l+m-2n+2=0, && \Pi_6:\, 3k-4l+2m-4n+4=0,\\
\Pi_7:\, k-2l-2n+2=0,&& \Pi_8:\, k-4l-m-4n+4=0\end{array}$$ are support
hyperplanes for the set $O$. Each hyperplane contains 5 integral elements.
Three elements $A=(0,0,0,1)$, $B=(0,-2,0,-1)$ and $(A+B)/2$ belong to each
hyperplane. Thus, the convex hull of the set $O\cap\Pi_i$ is an tetrahedron.

Besides $A$, $B$ and $(A+B)/2$, hyperplane $\Pi_1$ contains elements
$C=(1,0,-3,2$ and $D=(0,1,-2,1)$, hyperplane $\Pi_2$ --- $D$ and
$E=(-2,4,1,-2)$, hyperplane $\Pi_3$ --- $E$ and $F=(-2,3,2,-1)$, hyperplane
$\Pi_4$ --- $F$ and $G=(-1,0,3,2)$, hyperplane $\Pi_5$ --- $G$ and
$H=(0,1,2,1)$, hyperplane $\Pi_6$ --- $H$ and $K=(2,4,-1,-2)$, hyperplane
$\Pi_7$ --- $K$ and $L=(2,3,-2,-1)$, hyperplane $\Pi_8$ --- $L$ and $C$. The
union of these tetrahedrons is something like two octagonal pyramids
$ACDEFGHKL$ and $BCDEFGHKL$ with common base $CDEFGHKL$. Here elements $A$,
$B$, $D$, $F$, $H$ and $L$ are units and elements $C$, $E$, $G$ and $K$ have
norm 2.

In thus constructed polyhedron some faces (not all!) are identified:
\begin{itemize}
    \item the multiplication by unit $H$ identifies the face $ACD$ with the
    face $BHK$: $A\to H$, $C\to K$, $D\to B$;
    \item the multiplication by unit $L$ identifies the face $AFG$ with the
    face $BKL$: $A\to L$, $F\to B$, $G\to K$;
    \item the multiplication by unit $D$ identifies the face $AGH$ with the
    face $BDE$: $A\to D$, $G\to E$, $H\to B$;
    \item the multiplication by unit $F$ identifies the face $ALC$ with the
    face $BEF$: $A\to F$, $C\to E$, $L\to B$.
\end{itemize} Faces $ADE$, $AEF$, $AHK$, $AKL$, $BCD$, $BFG$, $BGH$ and $BLC$
are "free". The construction of the fundamental domain is not finished.

$3k+2l+m+n-1=0$ also is a support hyperplane for the set $O$. It contains 7
integral strictly positive elements: $A$, $D$, $E$, $F$, $M=(0,0,-1,2)$,
$N=(-2,4,0,-1)$ and $(-1,2,0,0)=(A+N)/2=(D+F)/2=(E+M)/2$. The convex hull of
these points is an octahedron, where $(A,N)$, $(D,F)$ and $(E,M)$ --- pairs of
opposite vertices. Here element $N$ is a unit and element $M$ has norm 2.

In thus obtained polyhedron --- two octagonal pyramids with a common base and
octahedron glued to them by the faces $ADE$ and $AFE$, the following faces are
identified:
\begin{itemize}
    \item the multiplication by unit $N$ identifies the face $AHK$ with the
    face $EFN$: $A\to N$, $H\to F$, $K\to E$;
    \item the multiplication by unit $N$ identifies the face $AKL$ with the
    face $NDE$: $A\to N$, $K\to E$, $L\to D$;
    \item the multiplication by unit $H$ identifies the face $ADM$ with the
    face $BGH$: $A\to H$, $D\to B$, $M\to G$;
    \item the multiplication by unit $L$ identifies the face $AFM$ with the
    face $BCL$: $A\to L$, $F\to B$, $M\to C$;
    \item the multiplication by unit $B^{-1}F$ identifies the face $BCD$ with the
    face $FMN$: $B\to F$, $C\to M$, $D\to N$;
    \item the multiplication by unit $B^{-1}D$ identifies the face $BFG$ with the
    face $DMN$: $B\to D$, $F\to N$, $G\to M$.
\end{itemize} The construction of the fundamental domain is finished.

\section{The simplest fundamental domains in the case of cyclic Galois group}

The combinatorics of a fundamental domain can be quite complex. The natural
question here is: how simple it can be? It turns out that there exists an one
parameter family of fields with a simple domains and also one field apart. At
first we will study the special field.

\subsection{The field $K=\mathbb{Q}[x]/(x^4-15x^2+45)$} The transformation
$x\mapsto x^3/3-3x$ is a generator of Galois group. Elements of the form
$$(k,l,m,n)+(\frac i6,0,0,\frac i2)+(0,\frac j6,\frac j2,\frac j2),\quad
k,l,m,n\in\mathbb{Z},\quad i,j=0,\ldots,5,$$ are integral.

Hyperplane $\Pi: 3k+6l+m+n=1$ is a support hyperplane for the set $O$. It
contains 8 integral strictly positive elements --- all of them are units:
$$\begin{array}{l} A=(0,0,0,1),\, B=\Bigl(\dfrac 16,-\dfrac 13,-2,\dfrac 92
\Bigr),\, C=\Bigl(\dfrac 13,-\dfrac 12,-\dfrac 72,\dfrac{13}{2}\Bigr),
\,D=\Bigl(\dfrac 16,-\dfrac 16,-\dfrac 32, 3\Bigr), \\ \\ A_1=\Bigl(0,\dfrac
13,0,-1\Bigr),\,B_1=\Bigl(0,\dfrac 16,-\dfrac 12,\dfrac 12\Bigr),
\,C_1=\Bigl(\dfrac 16,\dfrac 16,-\dfrac 32,1\Bigr),\, D_1=\Bigl(\dfrac
16,\dfrac 13,-1,-\dfrac 12\Bigr).\end{array}$$ The convex hull of the set
$O\cap\Pi$ is a decahedron with 2 quadrangular faces $ABCD$ and $A_1B_1C_1D_1$
--- bases, and 8 triangular faces. Bases are parallelograms, that are parallel
to each other. In the picture below is presented the pattern of triangular
faces of this "prism":
\[\begin{picture}(160,115) \multiput(0,40)(40,0){5}{\circle*{3}}
\multiput(0,100)(40,0){5}{\circle*{3}} \put(0,40){\line(1,0){160}}
\put(0,100){\line(1,0){160}} \multiput(0,40)(40,0){5}{\line(0,1){60}}
\put(-2,103){\small $A$} \put(38,103){\small $B$} \put(78,103){\small $C$}
\put(118,103){\small $D$} \put(158,103){\small $A$} \put(-5,30){\small $A_1$}
\put(35,30){\small $B_1$} \put(75,30){\small $C_1$} \put(115,30){\small $D_1$}
\put(155,30){\small $A_1$} \put(0,100){\line(2,-3){40}}
\put(40,40){\line(2,3){40}} \put(80,100){\line(2,-3){40}}
\put(120,40){\line(2,3){40}}
\end{picture}\] Faces of $ABCDA_1B_1C_1D_1$ are identified in the following way:
\begin{itemize}
    \item the multiplication by unit $A_1$ identifies the base $ABCD$ with the
    face $A_1B_1C_1D_1$: $A\to A_1$, $B\to B_1$, $C\to C_1$, $D\to D_1$;
    \item the multiplication by unit $D$ identifies the face $AA_1B_1$ with the
    face $CDD_1$: $A\to D$, $A_1\to D_1$, $B_1\to C$;
    \item the multiplication by unit $D_1$ identifies the face $ABB_1$ with the
    face $CC_1D_1$: $A\to D_1$, $B\to C$, $B_1\to C_1$;
    \item  the multiplication by unit $B$ identifies the face $AA_1D_1$ with the
    face $BCB_1$: $A\to B$, $A_1\to B_1$, $D_1\to C$;
    \item the multiplication by unit $B_1$ identifies the face $ADD_1$ with the
    face $CB_1C_1$: $A\to B_1$, $D\to C$, $D_1\to C_1$.
\end{itemize} The construction of the fundamental domain is finished.

\subsection{Fields $K_n$} Let $p_n=x^4-(n^2+4)x^2+n^2+4$, where $n$ is an odd
positive integer, and let $K_n$ be the field, defined by $p_n$.

\begin{lem} Polynomials $p_n$ are irreducible. \end{lem}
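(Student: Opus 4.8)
The plan is to exploit the special biquadratic shape of $p_n$. Writing $N=n^2+4$ we have $p_n=x^4-Nx^2+N$, a monic polynomial with integer coefficients, so by Gauss's lemma it is enough to show that $p_n$ does not factor over $\mathbb{Z}$. Since $p_n$ is an even polynomial with $p_n(0)=N\neq 0$, any root $r$ is nonzero and comes paired with the root $-r$, so a linear factor always produces the monic quadratic factor $x^2-r^2$; consequently every nontrivial factorization of $p_n$ gives a factorization into two monic quadratics, and it suffices to exclude those. First I would set $p_n=(x^2+bx+c)(x^2-bx+e)$ with $b,c,e\in\mathbb{Z}$ and compare coefficients. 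The $x^3$-term vanishes automatically, while the $x$-term gives $b(e-c)=0$, so either $b=0$ or $c=e$.

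If $b=0$, then comparing the $x^2$- and constant terms yields $c+e=-N$ and $ce=N$, so $c$ and $e$ are the roots of $t^2+Nt+N$; integrality of $c,e$ forces the discriminant $N^2-4N=N(N-4)=n^2(n^2+4)$ to be a perfect square. If instead $c=e$, then $c^2=ce=N$ and $2c-b^2=-N$, so here $N$ itself must be a perfect square. In the first case $n^2(n^2+4)$ is a square exactly when $n^2+4$ is, so in both cases a factorization can exist only if $n^2+4$ is a perfect square.

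Everything therefore reduces to the elementary fact that $n^2+4$ is not a perfect square for any positive integer $n$: for $n\ge 2$ we have the strict chain $n^2<n^2+4<(n+1)^2$, placing $n^2+4$ strictly between consecutive squares, and for $n=1$ one checks $n^2+4=5$ directly. As $n$ is a positive odd integer we have $n\ge 1$, so this covers all cases; hence no factorization into two monic quadratics exists and $p_n$ is irreducible over $\mathbb{Q}$. I do not anticipate a real obstacle here: the only delicate point is verifying that the sub-cases $b=0$ and $c=e$ exhaust all monic quadratic factorizations, after which the statement collapses onto the single arithmetic observation about $n^2+4$. When $n^2+4$ is squarefree one could alternatively apply Eisenstein's criterion at a prime dividing it, but since $n^2+4$ may be divisible by a square (for instance $n=11$ gives $125=5^3$) the factorization argument above is preferable, as it handles all $n$ uniformly.
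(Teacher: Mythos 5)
Your proof is correct, and it takes a genuinely different (and in one respect more careful) route than the paper's. The paper splits into two cases: a positive integral root $a$, excluded by a prime-valuation argument (in $a^4-(n^2+4)a^2+(n^2+4)=0$ the three exponents $4\alpha$, $2\alpha+\beta$, $\beta$ at any common prime cannot have exactly one minimum, forcing $4\alpha=\beta$ for every prime and hence $n^2+4$ a full square), and a product of two irreducible quadratics, which the paper simply \emph{asserts} must have the symmetric form $(x^2-ax+b)(x^2+ax+b)$, giving $b^2=n^2+4$. You instead use evenness and $p_n(0)\neq 0$ to fold any linear factor into a monic quadratic factor $x^2-r^2$, and then run a complete coefficient comparison on $(x^2+bx+c)(x^2-bx+e)$, obtaining the exhaustive dichotomy $b=0$ or $c=e$. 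Note that your $b=0$ subcase with $c\neq e$, i.e.\ a factorization $(x^2+c)(x^2+e)$ into two \emph{even} quadratics, is a priori possible and is not of the paper's asserted symmetric form (both factors are fixed by $x\mapsto -x$, so nothing forces equal constant terms); your discriminant computation $N(N-4)=n^2(n^2+4)$ disposes of it, so on this point your argument closes a small gap the paper leaves open. Both proofs funnel into the single arithmetic fact that $n^2+4$ is not a perfect square, which the paper invokes implicitly (``a contradiction'') while you prove it by squeezing $n^2<n^2+4<(n+1)^2$ for $n\ge 2$ and checking $n=1$ by hand. What the paper's valuation trick buys is a technique that works without the biquadratic symmetry; what your version buys is uniformity and completeness --- one elementary coefficient analysis covering all factorizations, including the case the paper's stated form omits. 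Your closing remark about Eisenstein is also sound: it would fail for $n=11$, where $n^2+4=125$, so avoiding it was the right call.
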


\begin{proof} If $p_n$ is reducible, then either it has a positive integral
root, or it has an irreducible factor of degree 2.

Let $a$ be a positive integral root of $p_n$: $a^4-(n^2+4)a^2+n^2+4=0$. Each
prime factor of $a$ is a factor of the number $n^2+4$ and vice versa. Let $q$
be such prime factor, $\alpha>0$ be an exponent of $a$ and $\beta>0$ be an
exponent of $n^2+4$. In the set of 3 numbers $\{4\alpha,2\alpha+\beta,\beta\}$
cannot be exactly one minimal, thus, $4\alpha=\beta$. As this is true for each
prime factor of $a$, then $n^2+4$ is a full square --- a contradiction.

Let $p_n$ be a product of two irreducible polynomials of degree 2 with integral
coefficients, then these factors are $x^2-ax+b$ and $x^2+ax+b$:
$$(x^2-ax+b)(x^2+ax+b)=x^4-(n^2+4)x^2+n^2+4.$$ But then again $n^2+4$ is a full
square. \end{proof}

\begin{rem} Fields $K_n$ are not pairwise different. Indeed, fields
$K_1$ and $K_{11}$ coincide:  if $y=-3x^3+5x\in K_1$, then $y^4-125y^2+125=0$.
\end{rem}

\subsection{The field $K_1\simeq\mathbb{Q}[x]/(x^4-5x^2+5)$} The transformation
$x\mapsto x^3-3x$ is a generator of Galois group. Elements of the form
$kx^3+lx^2+mx+n$, $k,l,m,n\in \mathbb{Z}$ are integral.

Hyperplane $\Pi: 2k+2l+m+n=1$ is a support hyperplane for the set $O$. $\Pi$
contains 10 integral strictly positive elements:
$$\begin{array}{ll}A=(0,0,0,1),&A_1=(0,1,0,-1),\\B=(0,0,-1,2),&B_1=(-1,2,1,-2),\\
C=(2,-2,-8,9),&C_1=(0,1,-2,1),\\D=(2,-2,-7,8),&D_1=(1,0,-3,2),\\
E=(1,-1,-4,5),&E_1=(0,1,-1,0).\end{array}$$The convex hull of this points in
$\Pi$ is a decahedron, combinatorially the same, as in subsection 5.1. The only
difference is that centers $E$ and $E_1$ of bases $ABCD$ and $A_1B_1C_1D_1$,
respectively, also are integral and strictly positive.

\subsection{The field $K_3\simeq\mathbb{Q}[x]/(x^4-13x^2+13)$} The transformation
$x\mapsto x^3/3-11x/3$ is a generator of Galois group. Elements of the form
$$(k,l,m,n)+\left(\frac i3,0,\frac i3,0\right)+\left(0,\frac j3,0,\frac j3\right),
\quad k,l,m,n\in \mathbb{Z},\quad i,j=0,1,2,$$ are integral.

Hyperplane $\Pi: 2k+2l+m+n=1$ is a support hyperplane for the set $O$. $\Pi$
contains 36 integral strictly positive elements. The convex hull of this points
in $\Pi$ is a decahedron, combinatorially the same, as in subsection 5.1.
Besides vertices $A,B,C,D,A_1,B_1,C_1,D_1$ of decahedron the intersection
$\Pi\cap O$ contains:
\begin{enumerate}
    \item centers of bases $ABCD$ and $A_1B_1C_1D_1$;
    \item two points on each segment $[A,A_1]$, $[B,B_1]$, $[C,C_1]$ and
    $[D,D_1]$, which divide each segment into 3 equal parts;
    \item 18 points inside the decahedron.
\end{enumerate}

\subsection{General case} Let assume that a field $K_n$ is not isomorphic to
any field $K_m$, $m<n$. Put $n=2a+1$, then transformation
$$x\mapsto \frac{x^3}{2a+1}-\frac{4a^2+4a+3}{2a+1}\,x$$ is a generator of
Galois group. Elements of the form
\begin{multline*}(k,l,m,n)+i\left(\frac{1}{2a+1},0,\frac{2a-1}{2a+1},0\right)+j
\left(0,\frac{1}{2a+1},0,\frac{2a-1}{2a+1}\right),\\ \\
k,l,m,n\in\mathbb{Z},\quad i,j=0,\ldots,2a,\end{multline*} are integral.

Hyperplane $\Pi: 2k+2l+m+n=1$ is a support hyperplane for the set $O$. The
convex hull of the set $\Pi\cap O$ in $\Pi$ is the same decahedron with
vertices $A,B,C,D,A_1,B_1,C_1,D_1$ and bases --- parallelograms $ABCD$ and
$A_1B_1C_1D_1$. In internal coordinates $k,l,m$ of the hyperplane $\Pi$ bases
lie in parallel planes $k+l=0$ and $k+l=1$. All vertices are units: $A=1$,
$$B=\dfrac{2(a+1)x^3-2(a+1)x^2-(8a^3+16a^2+16a+7)x+
8a^3+16a^2+18a+8}{2a+1},$$\par\medskip $$C=2x^3-2x^2-(8a^2+8a+8)x+8a^2+8a+9,$$
\par\medskip
$$D=\dfrac{2ax^3-2ax^2-(8a^3+8a^2+8a+1)x+ 8a^3+8a^2+10a+2}{2a+1},$$
\par\medskip
$$A_1=x^2-1,\quad B_1=\dfrac{x^3+2ax^2-(2a+3)x+2}{2a+1},$$ \par\medskip
$$C_1=x^2-2x+1,\quad D_1=\dfrac{-x^3+(2a+2)x^2-(2a-1)x-2}{2a+1}.$$
The multiplication by unit $A_1$ identifies bases $ABCD$ and $A_1B_1C_1D_1$.
Other 8 triangular faces of the "prism" are pairwise identified in the
following way:
$$\begin{array}{ll}\triangle AA_1B_1\sim \triangle CDD_1,& \triangle ABB_1\sim \triangle
CC_1D_1,\\ \triangle BB_1C\sim \triangle D_1AA_1,& \triangle B_1CC_1 \sim
\triangle DD_1A.\end{array}$$ Besides vertices, the intersection $\Pi\cap O$
contains centers of bases, $2a$ points on each segment $[A,A_1]$, $[B,B_1]$,
$[C,C_1]$ and $[D,D_1]$, which divide each segment into $2a+1$ equal parts, and
points inside the decahedron.

\begin{rem} Integers of the field $K_{11}$ are of the form
$$(k,l,m,n)+i\left(\frac{1}{275},0,\frac{3}{11},0\right)+j\left(0,\frac{1}{55},0,
\frac{4}{11}\right).$$ Therefore, the described above "prism" contains
\emph{several} fundamental decahedrons of the field $K_{11}$. \end{rem}

\section{Simple fundamental domains in the case of Galois group $\mathbb{Z}_2\oplus
\mathbb{Z}_2$}

In the case of Klein group we don't know fields, where fundamental domain
consists of one 3-dimensional face of $\Gamma$, but there are examples, where
this domain consists of two.

\subsection{The field $K=\mathbb{Q}[x]/(x^4-9x^2+9)$} Transformations $x\mapsto
x^3/3-3x$ and $x\mapsto -x$ are generators of Galois group. An element
$kx^3+lx^2+mx+n\in K$ is integral, if $3k,3l,m,n\in \mathbb{Z}$.

Hyperplane $\Pi_1:\,6k+3l+m+n-1=0$ is a support hyperplane for the
set $O$. It contains 14 integral strictly positive elements. This
elements belong to 2 parallel 2-dimensional planes in $\Pi_1$:
$P_0:\,9k+3l+m=0$ and $P_1:\,9k+3l+m=1$ (seven in each). Elements
$A=(0,0,0,1)$, $B=(0,1/3,-1,1)$, $C=(-1/3,4/3,-1,0)$,
$D=(-2/3,2,0,1)$, $E=(-2/3,5/3,1,-1)$, $F=(-1/3,2/3,1,0)$ and
$G=(-1/3,1,0,0$ belong to the plane $P_0$. Points $A,B,C,D,E$ and
$F$ are vertices of centrally-symmetric hexagon; $A$, $C$, $D$ and
$F$ are units; $B$ and $E$ have norm 4; $G$ is the center of
hexagon and has norm 9.

Elements $A_1=(0,1/3,0,0)$, $B_1=(-1/3,4/3,0,-1)$,
$C_1=(-4/3,4,1,-4)$, $D_1=(-2,17/3,2,-6)$, $E_1=(-5/3,14/3,2,-5)$,
$F_1=(-2/3,2,1,-2)$ and $G=(-1,3,1,-3$ belong to the plane $P_1$.
Points $A_1,B_1,C_1,D_1,E_1$ and $F_1$ are vertices of
centrally-symmetric hexagon; $A_1$, $C_1$, $D_1$ and $F_1$ are
units; $B_1$ and $E_1$ have norm 4; $G_1$ is the center of hexagon
and has norm 9.

Polyhedron $M_1$ --- the convex hull in $\Pi_1$ of the set
$O\cap\Pi_1$, is a "prism" with two parallel hexagon bases and ten
"side" faces: $ABA_1$, $A_1B_1CB$, $B_1C_1C$, $C_1D_1C$, $CDD_1$,
$DED_1$, $D_1E_1FE$, $E_1F_1F$, $FAA_1$, $F_1A_1F$. The
multiplication by unit $A_1$ identifies bases. Also
\begin{itemize}
    \item the multiplication by unit $F$ identifies the face $ABA_1$
    with the face $E_1F_1F$: $A\to F$, $B\to E_1$, $A_1\to F_1$;
    \item the multiplication by unit $A_1^{-1}F$ identifies the
    face $A_1B_1CB$ with the face $D_1E_1FE$: $A_1\to F$, $B_1\to
    E_1$, $C\to D_1$, $B\to E$;
    \item the multiplication by unit $C^{-1}D$ identifies the face
    $B_1C_1C$ with the face $DED_1$: $B_1\to E$, $C_1\to D_1$, $C\to
    D$.
\end{itemize}

Hyperplane $\Pi_2:\,3k+3l+n-1=0$ also is a support hyperplane for
the set $O$. The intersection $\Pi_2\cap O$ contains 4 elements:
$A$, $A_1$, $F$ and a unit $H=(-1/3,0,2,2)$. Polyhedron $M_2$ ---
the convex hull in $\Pi_2$ of the set $O\cap\Pi_2$, is a
tetrahedron, glued to triangular "side" face of $M_1$. The "side"
surface of $M_1\cup M_2$ is presented below:
\[\begin{picture}(240,90) \put(0,15){\line(1,0){240}}
\put(0,75){\line(1,0){240}} \put(0,15){\line(0,1){60}}
\put(240,15){\line(0,1){60}} \multiput(0,15)(40,0){7}{\circle*{3}}
\multiput(0,75)(40,0){7}{\circle*{3}} \put(0,75){\line(2,-3){40}}
\put(40,75){\line(2,-3){40}} \put(80,15){\line(0,1){60}}
\put(80,15){\line(2,3){40}} \put(120,15){\line(0,1){60}}
\put(120,75){\line(2,-3){40}} \put(160,75){\line(2,-3){40}}
\put(200,15){\line(0,1){60}} \put(200,15){\line(2,3){40}}
\put(230,35){\circle*{3}} \put(200,15){\line(3,2){30}}
\put(230,35){\line(1,-2){10}} \put(230,35){\line(1,4){10}} \put(-3,5){\small
$A$} \put(37,5){\small $B$} \put(77,5){\small $C$} \put(117,5){\small $D$}
\put(157,5){\small $E$} \put(197,5){\small $F$} \put(237,5){\small $A$}
\put(-3,80){\small $A_1$} \put(37,80){\small $B_1$} \put(77,80){\small $C_1$}
\put(117,80){\small $D_1$} \put(157,80){\small $E_1$} \put(197,80){\small
$F_1$} \put(237,80){\small $A_1$} \put(222,37){\small $H$}
\end{picture}\]
Faces $C_1D_1C$, $CDD_1$, $F_1A_1F$, $AA_1H$, $A_1FH$ and $FAH$
are pairwise identified in the following way:
\begin{itemize}
    \item the multiplication by unit $C^{-1}A$ identifies the face
    $C_1D_1C$ with the face $AA_1H$: $C_1\to A_1$, $D_1\to H$,
    $C\to A$;
    \item the multiplication by unit $C^{-1}A_1$ identifies the
    face $CDD_1$ with the face $A_1FH$: $C\to A_1$, $D\to H$,
    $D_1\to F$;
    \item the multiplication by unit $A_1^{-1}A$ identifies the
    face $F_1A_1F$ with the face $FAH$: $F_1\to F$, $A_1\to A$,
    $F\to H$.
\end{itemize} The construction of the fundamental domain is
finished.

\subsection{The field $K=\mathbb{Q}[x]/(x^4-25x^2+25)$} Transformations
$x\mapsto x^3/5-5x$ and $x\mapsto -x$ are generators of Galois group. An
element $kx^3+lx^2+mx+n\in K$ is integral, if $5k,5l,m,n\in \mathbb{Z}$.
Hyperplanes $\Pi_1:\,5k+5l+m+n=1$ and $\Pi_2:\,-75k+20l-3m+n=1$ are support
hyperplanes for the set $O$. The intersection $\Pi_1\cap O$ contains 14
elements and the intersection $\Pi_2\cap O$ --- 4. The combinatorial structure
of the fundamental polyhedron here is the same as in the previous example.

\begin{rem} The case of the field $K=\mathbb{Q}[x]/(x^4-49x^2+49)$ is
significantly more complex. \end{rem}

\end{document}